\documentclass[12pt]{article}
\usepackage{e-jc}



\usepackage{amsthm,amsmath,amssymb}

\usepackage{graphicx}

\usepackage[colorlinks=true,citecolor=black,linkcolor=black,urlcolor=blue]{hyperref}



\theoremstyle{plain}
\newtheorem{theorem}{Theorem}
\newtheorem{lemma}[theorem]{Lemma}

\theoremstyle{definition}

\theoremstyle{remark}

\newcommand{\M}{\operatorname{M}}

\newcommand{\T}{\operatorname{T}}


\title{A simple proof for the number of tilings of quartered Aztec diamonds}


\author{Tri Lai\\
\small Department of Mathematics\\[-0.8ex]
\small Indiana University\\[-0.8ex]
\small Bloomington, IN 47405\\
\small\tt tmlai@indiana.edu
}


\date{\small Mathematics Subject Classifications: 05A15, 05C70}

\begin{document}

\maketitle


\begin{abstract}
 We get four quartered Aztec diamonds by dividing an Aztec diamond region by two zigzag cuts passing its center. W. Jockusch and J. Propp (in an unpublished work) found that the number of tilings of quartered Aztec diamonds is given by simple product formulas. In this paper we present a simple proof for this result.

  \bigskip\noindent \textbf{Keywords:} Aztec diamond, domino, tilings, perfect matchings
\end{abstract}

\section{Introduction}
In this paper a (lattice) \textit{region} is a connected union of unit squares in the square lattice. A domino is the union of two unit squares that share an edge. A (domino) \textit{tiling} of a region $R$ is a covering of $R$ by dominos such that there are no gaps or overlaps. Denote by $\T(R)$ the number of tilings of the region $R$.

 The \textit{Aztec diamond} of order $n$ is defined to be the union of all the unit squares with integral corners $(x,y)$ satisfying $|x|+|y|\leq n+1$. The Aztec diamond of order $8$ is shown in Figure \ref{QA2}(a). In \cite{Elkies} it was shown that the number of tilings of the Aztec diamond of order $n$ is $2^{n(n+1)}$.

\begin{figure}\centering
\includegraphics[width=10cm]{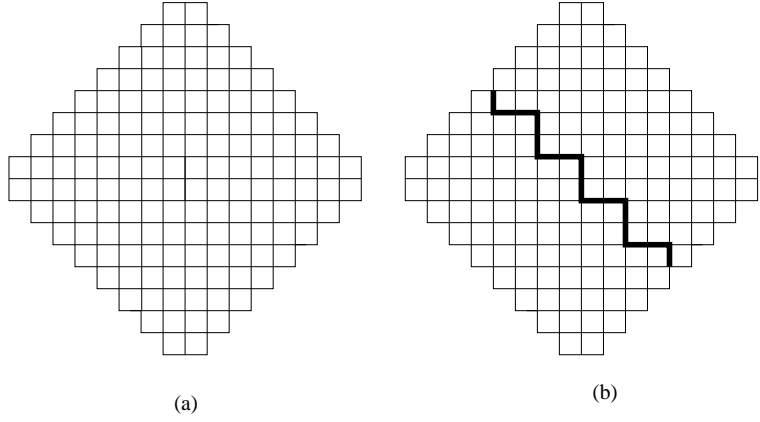}
\caption{ The Aztec diamond region of order $8$, and its division into two congruent parts.}
\label{QA2}
\end{figure}

Next, we consider three related families of regions name \textit{quartered Aztec diamonds}, that were introduced by Jockusch and Propp \cite{JP}. Divide the Aztec diamond of order $n$ into two congruent parts by a zigzag cut with 2-unit steps (see Figure \ref{QA2}(b) for an example with $n=8$). By superimposing two such zigzag cuts that pass the center of the Aztec diamond we partition the region into four parts, called quartered Aztec diamonds. Up to symmetry, there are essentially two different ways we can superimpose the two cuts. For one of them, we obtained a fourfold rotational symmetric pattern, and four resulting parts are congruent (see Figure \ref{QA}(a)). Denote by $R(n)$ these quartered Aztec diamonds. For the other, the obtained pattern has Klein 4-group reflection symmetry and there are two different kinds of quartered Aztec diamonds (see Figure \ref{QA} (b)); they are called \textit{abutting} and \textit{non-abutting} quartered Aztec diamonds. Denote by $K_{a}(n)$ and $K_{na}(n)$ the abutting and non-abutting quartered Aztec diamonds of order $n$, respectively.

\begin{figure}\centering
\includegraphics[width=10cm]{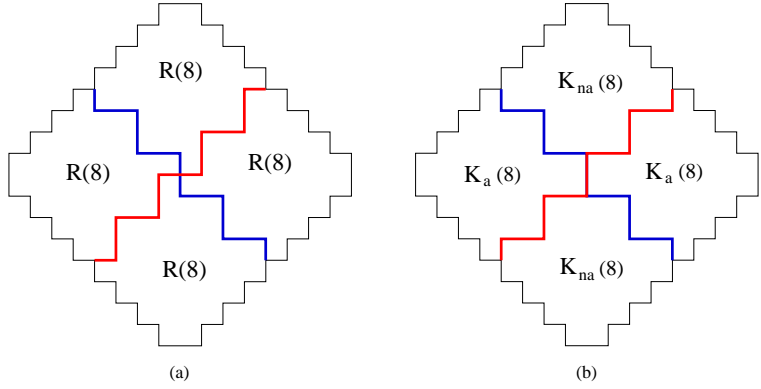}
\caption{Three kinds of quartered Aztec diamonds of order $8$.}
\label{QA}
\end{figure}

The number of tilings of a region three kinds of quartered Aztec diamond can be obtained by the theorem stated below (\cite{JP}, Theorem 1)

\begin{theorem}\label{main}

\begin{equation}\label{main1}
\T(R(4n+1))=\T(R(4n+2))=0
\end{equation}

\begin{equation}\label{main2}
\T(R(4n))=2^n\T(R(4n-1))=2^{n(3n-1)/2}\prod_{1\leq i<j\leq n}\frac{2i+2j-1}{i+j-1}
\end{equation}


\begin{equation}\label{main3}
\T(K_{a}(4n-2))=\T(K_a(4n))=2^{n(3n-1)/2}\prod_{1\leq i<j\leq n}\frac{2i+2j-3}{i+j-1}
\end{equation}

\begin{equation}\label{main4}
\T(K_{a}(4n-1))=\T(K_a(4n+1))=2^{n(3n-3)/2}\prod_{1\leq i\leq j\leq n}\frac{2i+2j-1}{i+j-1}
\end{equation}

\begin{equation}\label{main5}
\T(K_{na}(4n))=\T(K_{na}(4n+2))=2^{n(3n-1)/2}\prod_{1\leq i\leq j\leq n}\frac{2i+2j-1}{i+j-1}
\end{equation}

\begin{equation}\label{main6}
\T(K_{na}(4n-3))=\T(K_{na}(4n-1))=2^{n(3n-3)/2}\prod_{1\leq i<j\leq n}\frac{2i+2j-3}{i+j-1}
\end{equation}

\end{theorem}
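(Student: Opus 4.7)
The plan is to prove Theorem~\ref{main} by simultaneous induction on $n$ for all three families $R(n)$, $K_{a}(n)$, $K_{na}(n)$, using Kuo's graphical condensation as the main engine. I would first translate each quartered Aztec diamond into its planar dual bipartite graph $G$, so that domino tilings correspond to perfect matchings. Kuo condensation then supplies an identity of the form
\[
\T(G)\,\T(G-\{\alpha,\beta,\gamma,\delta\}) = \T(G-\{\alpha,\gamma\})\,\T(G-\{\beta,\delta\}) + \T(G-\{\alpha,\delta\})\,\T(G-\{\beta,\gamma\})
\]
for any four vertices $\alpha,\beta,\gamma,\delta$ chosen on the outer face in cyclic order and with the correct bipartite colors.

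A preliminary step is to dispose of the ``easy'' equalities in \eqref{main1}--\eqref{main6}. The vanishing in \eqref{main1} follows from a parity/color argument on the two sublattices. The pairings such as $\T(R(4n))=2^{n}\T(R(4n-1))$, $\T(K_{a}(4n-2))=\T(K_{a}(4n))$, and the analogues in \eqref{main4}--\eqref{main6} each come from identifying the dominos forced along one zigzag boundary: removing them leaves a smaller region of one of the three families, sometimes with a binary choice at each of $n$ notches that supplies the factor $2^{n}$.

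The core step is to establish the product formula itself for one representative in each family. For this I would choose four boundary vertices $\alpha,\beta,\gamma,\delta$ on each region so that, once the forced dominos are removed, the five regions appearing in Kuo's identity reduce to smaller members of the same three families $R$, $K_{a}$, $K_{na}$. This turns Kuo's identity into a linear recurrence whose entries are products of $\T(R(\cdot))$, $\T(K_a(\cdot))$, and $\T(K_{na}(\cdot))$. A routine manipulation of the closed forms in \eqref{main2}--\eqref{main6} as ratios of products then shows that the conjectured formulas satisfy this recurrence, and small base cases close the induction.

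The main obstacle will be selecting the four Kuo vertices for each family so that the five resulting subregions all reduce cleanly to quartered Aztec diamonds of smaller order; the zigzag boundary has a nontrivial microstructure, and slightly different choices are likely needed for the different residues of $n$ modulo $4$. A secondary difficulty is bookkeeping: a Kuo move applied to $R(n)$ naturally produces subregions lying in the $K_a$ or $K_{na}$ families, which is precisely why the three formulas must be proved in parallel rather than one at a time.
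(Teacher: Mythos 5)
Your outline diverges substantially from the paper, which never invokes Kuo condensation: the paper combines forced-edge reductions and Ciucu's complementation recurrences (e.g.\ $\T(K_{na}(4n))=2^n\T(K_{a}(4n-1))$) with Ciucu's Factorization Theorem applied to \emph{holey Aztec rectangles}. Since the number of perfect matchings of those holey Aztec rectangles is already known as an explicit product, the Factorization Theorem yields identities such as $\M(AR_{2n,4n}(\mathcal{B}_n))=2^{n}\T(R(4n))\T(K_{a}(4n))$, and taking the ratio of two such identities isolates quotients like $\T(K_{a})/\T(K_{na})$, which is what closes the induction. Your treatment of the ``easy'' parts --- the bipartite parity argument for \eqref{main1} and the forced-domino arguments for the pairings in \eqref{main2}--\eqref{main6} --- matches the paper and is fine.

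The core of your plan, however, has a genuine gap that you yourself flag: you never exhibit the four vertices $\alpha,\beta,\gamma,\delta$ for any of the three families, nor verify that the five graphs occurring in Kuo's identity reduce, after removing forced edges, to smaller quartered Aztec diamonds. This is not a routine detail. Deleting a boundary vertex on the staircase side of $R(n)$ (or of $K_{a}(n)$, $K_{na}(n)$) generally does not produce a region of type $R$, $K_{a}$, or $K_{na}$; it typically produces a quartered Aztec diamond with a dent at a prescribed position along the zigzag, a region lying outside the families covered by the theorem. A workable Kuo argument would therefore almost certainly require you to state and prove a strictly more general product formula (for dented quartered Aztec diamonds) so that the family is closed under the condensation moves --- this is exactly the extra combinatorial input the paper obtains for free from the known formulas for holey Aztec rectangles. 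Until the vertex choices are specified for each residue of $n$ modulo $4$ and the resulting recurrence is written down, the claim that ``a routine manipulation of the closed forms shows the formulas satisfy this recurrence'' cannot be checked either: the right-hand sides of \eqref{main2}--\eqref{main6} mix products over $i<j$ with products over $i\leq j$ and carry different powers of $2$, so verifying that they satisfy a specific two-term Kuo relation is a nontrivial computation that must actually be carried out. As written, the proposal is a plausible research programme rather than a proof.
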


In \cite{JP}, Juckusch and Propp presented a proof for Theorem 1 by investigating properties of ``\emph{antisymmetric monotone triangles}". We will prove Theorem \ref{main} by a visual way in the next section

\section{Proof of Theorem \ref{main}}
We have 4  recurrences  that were proved by M. Ciucu  in\cite{Ciucu2},  Theorem 4.1.

\begin{lemma}\label{lem1} For all $n\geq 1$ we have
\begin{equation}\label{eq1}
\T(R_{4n})=2^n\T(R_{4n-1})
\end{equation}
\begin{equation}\label{eq2}
\T(K_{na}(4n+1))=2^n\T(K_{na}(4n))
\end{equation}
\begin{equation}\label{eq3}
\T(K_{na}(4n))=2^n\T(K_{a}(4n-1))
\end{equation}
\begin{equation}\label{eq4}
\T(K_{a}(4n-2))=2^n\T(K_{na}(4n-3))
\end{equation}
\end{lemma}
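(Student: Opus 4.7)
The plan is to prove each of the four recurrences by the same two-step template: identify forced domino placements in the larger region, then show that after removing these forced dominos the remaining freedom consists of tiling (i) the smaller region on the right-hand side, and (ii) a disjoint collection of exactly $n$ independent $2\times 2$ blocks, each contributing a factor of $2$.

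First, I would overlay the larger region of each recurrence (say $R(4n)$) on top of the smaller one ($R(4n-1)$). The larger region is obtained from the smaller one by adjoining a thin zigzag strip along one of the boundaries created by the cut. Because the cut advances in 2-unit steps, each corner unit square along the new boundary has exactly one accessible neighbor inside the strip, so the domino covering it is forced. Propagating these forced edges inward through the strip — each forced domino pinning down its neighbor, and so on — one should find that most of the strip becomes rigidly tiled.

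Second, I would check that what is \emph{not} rigidly tiled inside the strip is exactly $n$ pairwise-disjoint $2\times 2$ squares, one attached to each zigzag notch. Since each $2\times 2$ square admits precisely two domino tilings, independently of the others, this produces the desired factor $2^n$. The complement of the strip in the larger region is then visibly congruent to the smaller quartered Aztec diamond on the right-hand side, completing the bijection between tilings of the larger region and pairs consisting of a tiling of the smaller region together with a binary choice at each of the $n$ free squares.

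The main obstacle will be carrying this out uniformly across all four recurrences, because the three families $R$, $K_a$, $K_{na}$ have different symmetry types and the shape of the zigzag strip differs between them — in particular, \eqref{eq3} and \eqref{eq4} involve transitioning between the abutting and non-abutting types, so the strip attaches along qualitatively different boundaries than in \eqref{eq1} and \eqref{eq2}. A secondary technical point is verifying that the forcing really does leave exactly $n$ free $2\times2$ blocks rather than a messier residue; this amounts to counting the zigzag steps along the attached boundary and checking the parity of the order (which is why the recurrences are stated for $4n$, $4n\pm 1$, $4n-2$, etc., rather than uniformly for all orders).
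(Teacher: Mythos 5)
There is a genuine gap, and it is fatal to the approach. First, note that the paper does not prove Lemma~\ref{lem1} at all: it is quoted from \cite{Ciucu2}, Theorem 4.1, where the four recurrences are derived from Ciucu's Complementation Theorem. That is not a cosmetic choice. In each of (\ref{eq1})--(\ref{eq4}) the order changes by $1$, so the larger region differs from the smaller one along the \emph{outer staircase} boundary (the quarter of the Aztec diamond's boundary), not along the zigzag cut. Your forcing premise fails there: the outermost layer of the Aztec diamond of order $m$ in one quadrant consists of the unit squares $[i,i+1]\times[m-1-i,m-i]$, and consecutive squares in this layer meet only at a corner. So in the dual graph the added strip is (apart from a single adjacent pair at the diamond's corner) an independent set: it contains no forced edges, no internal dominoes at all, and certainly no $2\times 2$ blocks. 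Every square of the strip must be matched \emph{inward}, in one of two non-forced ways, and these choices interact with the tiling of the inner region. A simple count makes the contradiction concrete for (\ref{eq1}): $|R(4n)|-|R(4n-1)|=\tfrac{4n(4n+1)}{2}-\tfrac{(4n-1)4n}{2}=4n$, so if your decomposition held, the strip would have to consist of $n$ pairwise disjoint $2\times 2$ blocks and \emph{nothing else} (no forced dominoes), which is impossible for a diagonal layer of pairwise non-adjacent squares. The same objection applies to (\ref{eq2})--(\ref{eq4}), which moreover change the type of the region.

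The underlying point is that $\T(R(4n))=2^n\T(R(4n-1))$ is a quartered analogue of $\T(AD_n)=2^n\T(AD_{n-1})$, and like that recurrence it is genuinely non-local: tilings of the larger region do \emph{not} restrict to tilings of the smaller one, so no bijection of the form (tiling of smaller region) $\times$ ($n$ independent binary choices) exists. Forced-edge arguments of the kind you describe are exactly what the paper uses in Lemma~\ref{lem2}, where the order changes by $2$, the new material lies along the $2$-unit-step cut where corner squares really do have a unique neighbour, and the resulting factor is $2^0=1$. To get the factor $2^n$ one needs a non-local tool --- Ciucu's Complementation Theorem as in \cite{Ciucu2}, the Factorization Theorem as in Lemma~\ref{lem3}, or the antisymmetric monotone triangles of \cite{JP} --- and your proposal supplies no substitute for it.
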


The \textit{dual graph} of a region $R$ is the graph whose vertices are unit square in $R$ and whose edges connect precisely two unit squares sharing an edge. A \textit{perfect matching} of a graph $G$ is a collection of edges such that each vertex of $G$ is adjacent to exactly one selected edge. Denote by $\M(G)$ the number of perfect matchings of $G$. By a well-known bijection between tilings of a region and perfect matchings of its dual graph, we enumerate perfect matchings of the dual graph of a region rather than enumerating its tilings directly. Since we are considering only regions in the square lattice, one can view the dual graphs of those regions as subgraphs of the infinite square grid $\mathbb{Z}^2$.

An edge in a graph $G$ is called a \textit{forced edge}, if it is in every perfect matching of $G$. One can remove some forced edges from a graph to get a new graph with the same number of perfect matchings. We have the following lemma by considering forced edges in the dual graphs of quartered Aztec diamonds.

\begin{figure}\centering
\includegraphics[width=10cm]{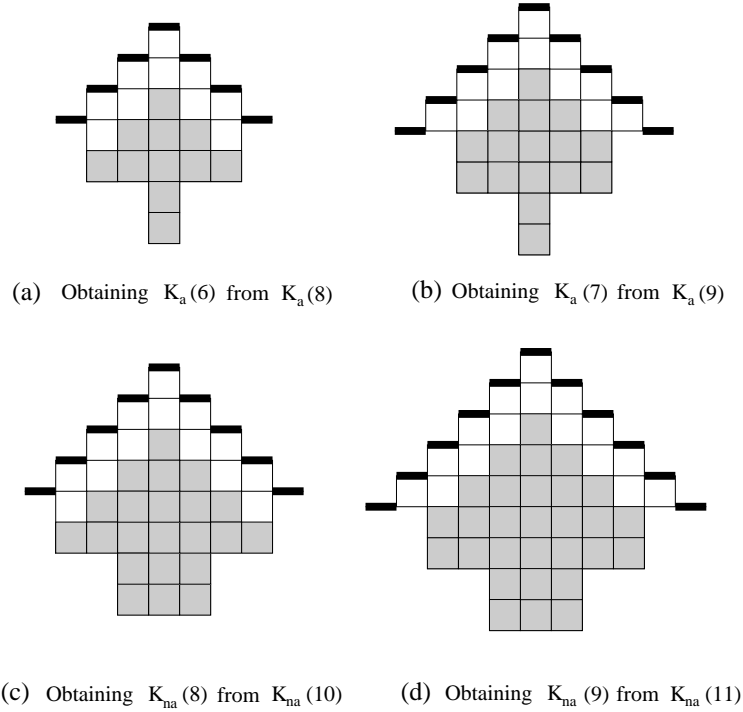}
\caption{Illustrating the proof of Lemma \ref{lem2}.}
\label{force1}
\end{figure}

\begin{lemma}\label{lem2} For any $n\geq 1$
\begin{equation}\label{eq5}
\T(K_a(4n-2))=\T(K_{a}(4n))
\end{equation}
\begin{equation}\label{eq6}
\T(K_{a}(4n-1))=\T(K_{a}(4n+1))
\end{equation}
\begin{equation}\label{eq7}
\T(K_{na}(4n))=\T(K_{na}(4n+2))
\end{equation}
\begin{equation}\label{eq8}
\T(K_{na}(4n+1))=\T(K_{na}(4n+3))
\end{equation}
\end{lemma}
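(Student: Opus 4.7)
The plan is to exhibit, for each of the four identities, a family of forced edges in the dual graph of the larger region whose removal (together with the unit squares they cover) yields exactly the dual graph of the smaller region. Since a forced edge belongs to every perfect matching, deleting it and its two endpoint squares leaves the number of perfect matchings unchanged, so this immediately gives the desired equalities in tilings $\T$.

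First I would set up coordinates for $K_a(n)$ and $K_{na}(n)$ so that the two zigzag-cut sides of each region are described explicitly. The crucial observation is that a zigzag cut with 2-unit steps alternates convex and concave corners, and at every convex corner of the boundary a single unit square protrudes inward with only one neighbor in the interior. The domino covering that protruding square is therefore forced. After every such boundary-forced domino is deleted, the row just inside the original boundary becomes the new boundary, exposing a fresh collection of protruding squares whose covering dominos are again forced. Iterating this cascade a bounded number of times peels off exactly one outer ``zigzag layer'' of the region.

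For the identity $\T(K_a(4n)) = \T(K_a(4n-2))$, I would verify by direct inspection that applying this peeling procedure along both zigzag sides of $K_a(4n)$ removes exactly the set of unit squares that distinguishes $K_a(4n)$ from $K_a(4n-2)$, leaving a region congruent to $K_a(4n-2)$. The remaining three identities are handled by the same strategy: in each case the pair of parameters ($4n$ vs.\ $4n-2$, or $4n+1$ vs.\ $4n-1$) differs precisely by one outer zigzag layer, which is stripped off by cascading forced edges. Figure~\ref{force1} already illustrates one of these cases, so the written argument can focus on that representative and indicate how the symmetry of the construction takes care of the others.

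The main obstacle will be the combinatorial bookkeeping of confirming that what remains after the cascade is genuinely a copy of the smaller quartered Aztec diamond, rather than a shifted, rotated, or slightly deformed variant. The danger is concentrated at the two endpoints of each zigzag cut, where the forcing pattern interacts with the straight sides inherited from the Aztec diamond. Once this matching of corners is checked on a figure for one representative identity, and one verifies that the parity class of $n$ in each identity is such that the cascade terminates cleanly, the remaining three identities follow by the same argument applied to the appropriate region.
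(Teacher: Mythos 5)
Your proposal is correct and follows essentially the same route as the paper: the paper's proof also observes that the dual graph of the smaller region in each identity is obtained from the dual graph of the larger one by deleting forced edges along the zigzag boundary, with the verification carried out by the figures. The cascading/peeling description and the caution about checking the corners where the zigzag meets the straight sides are exactly the content the paper delegates to Figure~\ref{force1}.
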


\begin{proof}
Instead of comparing the numbers of tilings of the regions, we compare the numbers of perfect matchings of their dual graphs. In each of the four equalities, the dual graph of the region on the left is obtained from the dual graph of the region on the right by removing forced edges. The proofs of   (\ref{eq5})-(\ref{eq8}) are illustrated by Figures \ref{force1} (a)-(d), respectively. In these figures, the forced edges are represented by the bold edges, and the dual graph of the region on the left of each equality is represented by the graph consisting of shaded unit squares.
\end{proof}

\begin{figure}\centering
\includegraphics[width=12cm]{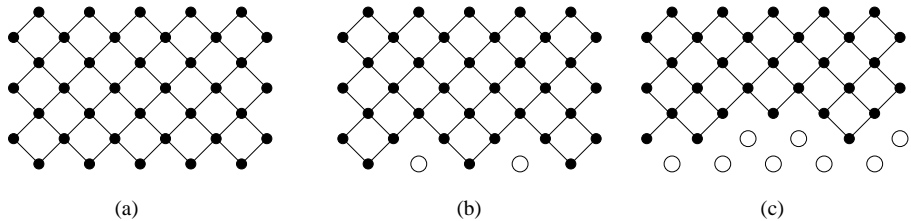}
\caption{The Aztec rectangle and two holey Aztec rectangles of order $3\times 5$.}
\label{HoleyAR}
\end{figure}

Next, we consider a well-known family of graphs as follows.  Consider a $(2m+1)\times (2n+1)$ rectangular chessboard and suppose that the corners are black. The $m\times n$ \textit{Aztec rectangle} is the graph whose vertices are the white square and whose edges connect precisely those pairs of white squares  that are diagonally adjacent (see Figure \ref{HoleyAR}(a) for an example with $m=3$ and $n=5$). 
We are interested in the the number of perfect matchings of two families of \textit{holey Aztec rectangles} as follows.

\begin{lemma}[see \cite{Ciucu1}, (4.4); or \cite{Krat}, Lemma 1] \label{lem4}The number of perfect matchings of a $m\times n$ Aztec rectangle, where all the vertices in the bottom-most row, except for the $a_1$-st, the $a_2$-nd, $\dots$, and the $a_m$-th vertex, have been removed (see Figure \ref{HoleyAR}(b) for an example with $m=3$, $n=5$, $a_1=1$, $a_2=3$, $a_3=5$), equals
\begin{equation}
2^{m(m+1)/2}\prod_{1\leq i<j\leq m}\frac{a_j-a_i}{j-i}
\end{equation}
\end{lemma}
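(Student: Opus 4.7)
The plan is to recast perfect matchings of the holey Aztec rectangle as families of non-intersecting lattice paths and then evaluate the resulting Lindström--Gessel--Viennot (LGV) determinant. First I would employ the standard bijection between matchings of the dual graph of an Aztec rectangle and families of $m$ non-intersecting paths on $\mathbb{Z}^2$: the $m$ retained unit squares in the bottom row at positions $a_1 < a_2 < \cdots < a_m$ fix the path endpoints, while the top row of the rectangle canonically supplies the $m$ starting points (at positions $1,2,\dots,m$). Each path takes steps in two allowed diagonal directions, and the ``free'' matching choices along each path contribute a power of $2$ that has to be tracked carefully.

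With the bijection in hand, the LGV lemma gives
\begin{equation*}
\M(\text{holey rectangle}) \;=\; \det_{1\le i,j\le m}\bigl(p(s_i,t_j)\bigr),
\end{equation*}
where $p(s_i,t_j)$ counts the weighted single paths from source $s_i$ to sink $t_j$. A direct count of such paths shows that $p(s_i,t_j)$ is, up to a row-- and column--independent factor of $2$, a binomial coefficient of the form $\binom{n}{a_j-i}$ (or a very close variant). Factoring these common factors out of the matrix yields a cleaner binomial determinant.

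The concluding step is the evaluation of $\det\bigl(\binom{n}{a_j-i}\bigr)_{1\le i,j\le m}$. Writing the binomial coefficient as a ratio of factorials and pulling $a_j!$-- and $(n-a_j)!$--type factors out of the columns reduces the entries to polynomials in $a_j$, and the resulting determinant is Vandermonde--like, evaluating to $\prod_{1\le i<j\le m}(a_j-a_i)/\prod_{1\le i<j\le m}(j-i)$. Combining this with the accumulated powers of $2$ from the bijection and the factorial prefactors produces the claimed $2^{m(m+1)/2}$ in front.

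The main obstacle is bookkeeping rather than a single hard step: several nearly--equivalent bijections between Aztec matchings and non-intersecting paths appear in the literature, and one must pick the encoding that makes the single--path counts truly binomial and makes the accumulated factor of $2$ come out to exactly $m(m+1)/2$. Once the encoding is fixed, the LGV identification and the ensuing Vandermonde--type determinant evaluation are both entirely routine.
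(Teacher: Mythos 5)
First, a point of comparison: the paper does not prove Lemma \ref{lem4} at all --- it is imported verbatim from \cite{Ciucu1} (eq.\ (4.4)) and \cite{Krat} (Lemma 1) --- so there is no in-paper argument to measure you against. Your overall strategy (encode matchings of the holey Aztec rectangle as non-intersecting lattice paths and evaluate the resulting LGV determinant) is the strategy of the cited literature: Helfgott--Gessel argue via lattice paths, and Krattenthaler's Schur-function proof amounts to the same determinant. So the route is viable in principle.

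However, the two concrete claims your sketch rests on are both false, and the second cannot be repaired in the form you state. (i) The single-path counts are not ``(row- and column-independent power of $2$) times $\binom{n}{a_j-i}$.'' Take $m=1$: the formula asserts that the holey $1\times n$ Aztec rectangle has exactly $2^{1}=2$ matchings for every $n$ and every position $a_1$ (easily checked directly for $n=2$, $a_1\in\{1,2\}$). With one path, LGV forces the weighted single-path count to be the constant $2$, independent of $a_1$ and $n$; but $\binom{n}{a_1-1}$, or any shift of it, varies with $a_1$, and no global factor of $2$ can fix that. (ii) Relatedly, $\det\bigl(\binom{n}{a_j-i}\bigr)$ carries column factors of the shape $n!/\bigl((a_j-1)!\,(n-a_j+m)!\bigr)$, which depend on $n$ and on the $a_j$, whereas the target prefactor $2^{m(m+1)/2}$ depends on neither; these factors cannot cancel for all choices of $a_1<\dots<a_m$, so the ``accumulated powers of $2$ plus factorial prefactors'' cannot collapse to $2^{m(m+1)/2}$. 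For the Vandermonde reduction to work, the entries must have $a_j$ in the \emph{upper} argument of the binomial, e.g.\ entries reducible by row/column operations to $2^{\,m+1-i}\binom{a_j-1}{i-1}$, whose determinant is $2^{m(m+1)/2}\det\bigl(\binom{a_j-1}{i-1}\bigr)=2^{m(m+1)/2}\prod_{i<j}\frac{a_j-a_i}{j-i}$. In the standard bijections the raw single-path counts are Delannoy/Schr\"oder-type weighted sums $\sum_k 2^k\binom{\cdot}{k}\binom{\cdot}{k}$ rather than single binomials, and the determinant is evaluated by a Cauchy--Binet factorization (the $2^{m(m+1)/2}$ arising as the triangular determinant $\det\bigl(2^k\binom{i}{k}\bigr)$), not by pulling factorials out of columns. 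As written, your middle and final steps fail; the ``bookkeeping'' you defer is exactly where the proof lives.
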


Next, we consider a variant of the lemma above (see \cite{Gessel}, Lemma 2; or \cite{Krat}, Lemma 2).

\begin{lemma}\label{lem5} The number of perfect matchings of a $m\times n$ Aztec rectangle, where all the vertices in the bottom-most row have been removed, and where the $a_1$-st, the $a_2$-nd, $\dots$, and the $a_m$-th vertex, have been removed from the resulting graph (see Figure \ref{HoleyAR}(c),  for and example with $m=3$, $n=5$, $a_1=3$, $a_2=4$,$a_3=6$), equals
\begin{equation}
2^{m(m-1)/2}\prod_{1\leq i<j\leq m}\frac{a_j-a_i}{j-i}
\end{equation}
\end{lemma}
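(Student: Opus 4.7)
The plan is to prove Lemma \ref{lem5} by reducing it to Lemma \ref{lem4} via a forced-edges argument, in the same spirit as the proof of Lemma \ref{lem2}. The two formulas share the Vandermonde-like product $\prod_{1 \le i < j \le m}(a_j-a_i)/(j-i)$ and differ only by the factor $2^m$ in the prefactor, strongly suggesting that the two graphs are related by $m$ independent binary choices.

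Concretely, starting from the graph $G'$ of Lemma \ref{lem4} applied to an $m \times n$ Aztec rectangle with retained bottom-row positions $b_1,\ldots,b_m$, each retained bottom vertex has exactly two upward neighbors in the row immediately above it. Matching each retained bottom vertex to one of its two choices in all $2^m$ possible ways partitions the perfect matchings of $G'$ into $2^m$ classes, each corresponding to perfect matchings of a Lemma \ref{lem5}-type graph with parameters $a_i \in \{b_i,\, b_i+1\}$. After removing the forced matches, this produces an identity
\[
\T(G') \;=\; \sum_{s \in \{0,1\}^m} \T(G_{b+s}),
\]
where $G_a$ denotes the Lemma \ref{lem5} graph with parameter list $a$. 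Combined with Lemma \ref{lem4}, the task then reduces to verifying the polynomial identity $\sum_{s \in \{0,1\}^m} V(b+s) = 2^m\, V(b)$ for the Vandermonde $V$, after which the formula for $\T(G_a)$ can be extracted by a polynomiality argument (both sides are polynomials in the $a_i$ of bounded degree, agreeing on enough values).

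Alternatively, and likely cleaner, one can reinterpret perfect matchings of the Lemma \ref{lem5} graph as families of $m$ non-intersecting lattice paths via the standard domino-to-path bijection, as done in \cite{Gessel}, then invoke the Lindström-Gessel-Viennot lemma to obtain a determinant which evaluates to the stated product via Vandermonde manipulations; the prefactor $2^{m(m-1)/2}$ emerges naturally from the path structure (one factor of $2$ per "free" diagonal step contributed by each pair of paths).

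The main obstacle in the first approach is that the $2^m$-term identity does not directly isolate a single $\T(G_a)$, so one must either use induction on $m$ or invoke a polynomiality input to pick off individual terms. In the LGV approach the main obstacle is the determinant evaluation itself, which is routine but requires careful row/column operations. Either way, the crucial technical step is the careful indexing between the removed positions $a_i$ in the new bottom row of $n+1$ vertices and the retained positions in Lemma \ref{lem4}, particularly around the boundary positions $1$ and $n+1$.
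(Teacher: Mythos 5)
First, note that the paper does not prove Lemma \ref{lem5} at all: it is imported verbatim from the literature (\cite{Gessel}, Lemma~2; \cite{Krat}, Lemma~2), so there is no in-paper proof to compare against. Judged on its own terms, your proposal is a plan with a genuine gap rather than a proof. Your first approach is based on a correct combinatorial identity: matching each retained bottom vertex $b_i$ of the Lemma~\ref{lem4} graph to one of its two upward neighbours $b_i$ or $b_i+1$ does yield $\M(AR_{m,n}(b))=\sum_{s\in\{0,1\}^m}\M(\overline{AR}_{m,n}(b+s))$ (with colliding choices contributing empty classes), and the companion identity $\sum_{s}V(b+s)=2^mV(b)$ for the Vandermonde is also true (expand $\det[b_j^{i-1}+(b_j+1)^{i-1}]$ by row reduction). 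But this only shows that the claimed formula and the true matching count have the same image under the operator $\prod_i(I+S_i)$, where $S_i$ is the shift in the $i$-th coordinate. That operator is not injective on arbitrary functions of $(a_1,\dots,a_m)\in\mathbb{Z}^m$ (it kills $(-1)^{a_1}$, for instance), so you cannot conclude the two functions agree. The "polynomiality input" you invoke to break this deadlock --- that $\M(\overline{AR}_{m,n}(a))$ is a polynomial of bounded degree in the $a_i$ --- is not free: establishing it essentially requires the Lindstr\"om--Gessel--Viennot determinant representation, i.e.\ your second approach, at which point the first approach is redundant.

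Your second approach (domino-to-nonintersecting-path bijection plus LGV, then a determinant evaluation) is indeed the route taken in the cited source \cite{Gessel}, and it would work, but as written it is only a pointer: you do not set up the bijection for this particular holey region, do not write down the starting and ending points or the determinant entries, and do not perform the evaluation that produces the prefactor $2^{m(m-1)/2}$ and the product $\prod_{i<j}(a_j-a_i)/(j-i)$. Since the determinant evaluation is precisely where the content of the lemma lives, the proposal cannot be accepted as a proof; either carry out the LGV computation in full, or simply cite \cite{Gessel} or \cite{Krat} as the paper does.
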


Denote by $AR_{m,n}(\{a_1,\dotsc,a_k\})$ and $\overline{AR}_{m,n}(\{a_1,\dotsc,a_k\})$ the graphs in Lemmas \ref{lem4} and \ref{lem5}, respectively.

Let $G$ be a connected subgraph of $\mathbb{Z}^2$ symmetric about a diagonal lattice $l$. Assume all the vertices of $G$ on $l$ are consecutive lattice points on that line. Go along the line $l$ from left to right, and alternate between deleting the edges of $G$ that touch them from below, and deleting the edges of $G$ that touch them from above (see Figure \ref{factor} for an example). Let $G^+$ and $G^-$ be the connected components of the resulting graph that are above and below $l$. It is easy to see that the number of vertices of $G$ on $l$ must be even if $G$ has perfect matchings, let $w(G)$ be half of this number. Then Ciucu's Factorization Theorem \cite{Ciucu1} implies that
\begin{equation}\label{factoreq}
\M(G)=2^{w(G)}\M(G^+)\M(G^-)
\end{equation}

\begin{figure}\centering
\includegraphics[width=10cm]{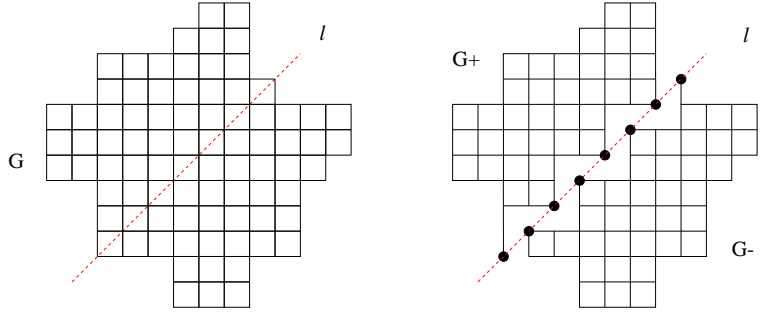}
\caption{A symmetric graph $G$, and two graphs $G^+$ and $G^-$ after the cutting procedure.}
\label{factor}
\end{figure}

By applying the Factorization Theorem we get new properties of quartered Aztec diamonds stated in the lemma below.

\begin{lemma}\label{lem3} For $n\geq 1$ we have

\begin{equation}\label{eq9}
 \M(AR_{2n,4n}(\mathcal{B}_n))=2^{n}\T(R(4n))\T(K_{a}(4n)),
\end{equation}

\begin{equation}\label{eq10}
\M(AR_{2n,4n}(\mathcal{A}_n))=2^{n}\T(R(4n))\T(K_{na}(4n)),
\end{equation}

\begin{equation}\label{eq11}
\M(\overline{AR}_{2n,4n-1}(\mathcal{A}_n))=2^{n}\T(R(4n-1))\T(K_{a}(4n-1)),
\end{equation}

\begin{equation}\label{eq12}
\M(\overline{AR}_{2n,4n-1}(\mathcal{B}_n))=2^{n}\T(R(4n-1))\T(K_{na}(4n-1)),
\end{equation}

where $\mathcal{A}_n=\{1,3,\dotsc,2n-1\}\cup\{2n+2,2n+4,\dotsc,4n\}$\\ and  $\mathcal{B}_n=\{2,4,\dotsc,2n\}\cup\{2n+1,2n+3,\dotsc,4n-1\}$.
\end{lemma}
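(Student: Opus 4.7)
The plan is to apply Ciucu's Factorization Theorem (\ref{factoreq}) to each of the four holey Aztec rectangles appearing on the left-hand sides of (\ref{eq9})--(\ref{eq12}). All four identities will come from a single template; the differences among them come only from the choice of hole set ($\mathcal{A}_n$ versus $\mathcal{B}_n$) and whether the ambient graph is $AR_{2n,4n}$ or $\overline{AR}_{2n,4n-1}$.

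First I check that each of the four graphs admits a reflective symmetry about a diagonal lattice line $l$ passing through its center. The sets $\mathcal{A}_n$ and $\mathcal{B}_n$ are each invariant under the involution $a\mapsto 4n+1-a$, so the configuration of missing bottom-row vertices is symmetric about the perpendicular bisector of the long side of the Aztec rectangle. After viewing the Aztec-rectangle graph as a subgraph of $\mathbb{Z}^2$ in the natural way (the $45^\circ$ rotation that sends diagonal adjacency of white squares to ordinary edge adjacency), this reflection axis becomes a diagonal line of $\mathbb{Z}^2$, so the hypothesis of the Factorization Theorem is met. A direct count shows that $l$ passes through exactly $2n$ vertices in each case, so $w(G)=n$ and the theorem produces the prefactor $2^n$ that appears on the right-hand sides of (\ref{eq9})--(\ref{eq12}).

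Next I identify the two halves $G^+$ and $G^-$ produced by Ciucu's alternating-deletion procedure along $l$. In every case one half is (isomorphic to) the dual graph of the fourfold-symmetric quartered Aztec diamond $R(4n)$ or $R(4n-1)$, while the other is the dual graph of an abutting or non-abutting quartered Aztec diamond of the same order. Passing from $\mathcal{A}_n$ to $\mathcal{B}_n$ shifts the bottom-row hole pattern by one unit, which in turn toggles whether the alternating deletion along $l$ begins by removing the edges that touch a given on-axis vertex from above or from below; this is precisely the toggle between the abutting and non-abutting variants. Similarly, replacing $AR_{2n,4n}$ by $\overline{AR}_{2n,4n-1}$ removes the top edge-row of cells and changes $R(4n)$ to $R(4n-1)$ on the $R$-side of the factorization. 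Combining these identifications with the bijection $\M(\cdot)=\T(\cdot)$ between matchings of a dual graph and tilings of its region, together with (\ref{factoreq}), yields (\ref{eq9})--(\ref{eq12}).

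The main obstacle is the geometric identification in the previous paragraph: for each of the four cases one must verify, by drawing the picture carefully, that after the alternating deletion the two halves are precisely the dual graphs of $R(\cdot)$ and the appropriate $K_a(\cdot)$ or $K_{na}(\cdot)$ region, with all boundary bumps and notches lining up correctly (in particular, the holes on the bottom row of the Aztec rectangle are what cut the long boundary into the zigzag boundaries of the quartered Aztec diamonds). This visual bookkeeping is the only substantive part of the argument; once it is checked, the rest follows mechanically from the Factorization Theorem.
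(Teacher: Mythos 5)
Your proposal is correct and follows essentially the same route as the paper: apply Ciucu's Factorization Theorem about the diagonal symmetry axis (the hole sets $\mathcal{A}_n,\mathcal{B}_n$ being invariant under $a\mapsto 4n+1-a$), note that the axis carries $2n$ vertices so $w(G)=n$, and identify the two pieces $G^{\pm}$ with the dual graphs of $R(\cdot)$ and the appropriate $K_a(\cdot)$ or $K_{na}(\cdot)$. The paper likewise leaves that final identification to a careful inspection of the figures, so your level of detail matches its proof.
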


\begin{figure}\centering
\includegraphics[width=10cm]{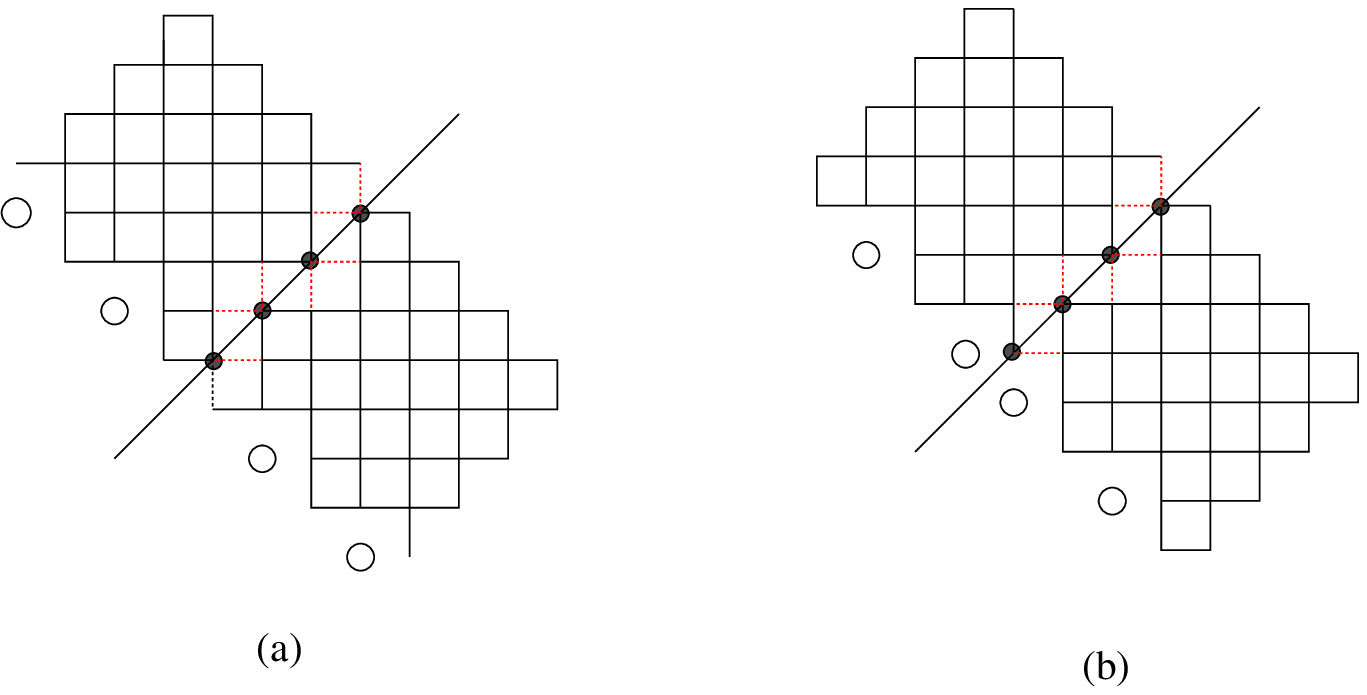}
\caption{Illustrating the proofs of (\ref{eq9}) and (\ref{eq10}) in Lemma \ref{lem3}.}
\label{QA8}
\end{figure}

\begin{figure}\centering
\includegraphics[width=10cm]{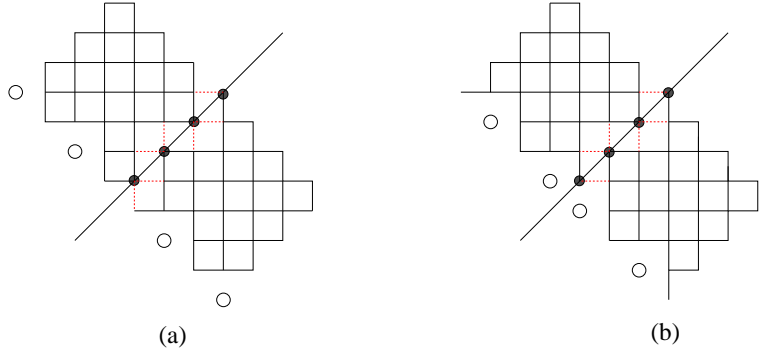}
\caption{Illustrating the proofs of (\ref{eq11}) and (\ref{eq12}) in Lemma \ref{lem3}.}
\label{QA7}
\end{figure}

\begin{proof}
Apply the Factorization Theorem to the graph $G:=AR_{2n,4n}(\mathcal{B}_n)$ with the symmetric axis $l$. There are $2n$ vertices of $G$ on $l$, so $w(G)=n$. It is easy to see that $G^+$ is isomorphic to the dual graph of $K_{a}(4n)$, and $G^-$ is isomorphic to the dual graph of $R(4n)$ (see Figure \ref{QA8}(a) for the case $n=2$). Then  (\ref{eq9}) follows.

Again, we apply the Factorization Theorem to the graph $\overline{G}:=AR_{2n,4n}(\mathcal{A}_n)$ with the symmetric axis $l'$.  It is easy to see that $\overline{G}^+$ is isomorphic to the dual graph of $R(4n)$, and $\overline{G}^-$ is isomorphic to the dual graph of $K_{na}(4n)$ (the case $n=2$ is shown in Figure \ref{QA8}(b)). Moreover,  it is easy to see $w(\overline{G})=n$. This implies  (\ref{eq10}).

Similarly, two equalities (\ref{eq11}) and (\ref{eq12}) can be obtained from applying the Factorization Theorem to $\overline{AR}_{2n,4n-1}(\mathcal{A}_n)$ and $\overline{AR}_{2n,4n-1}(\mathcal{B}_n)$; and the proofs are illustrated in Figures \ref{QA7}(a) and (b), respectively.
\end{proof}

Assume $\mathcal{A}_n$ and $\mathcal{B}_n$ are two sets defined in Lemma \ref{lem3}. Denote by
\[\Delta(\mathcal{A}_n):=\prod_{1\leq i< j\leq 2n}(a_j-a_i),\] the product is taken over all elements $a_i$'s in $\mathcal{A}_n$. Similarly we denote by $\Delta(\mathcal{B}_n)$ the corresponding product with elements in $\mathcal{B}_n$.

\begin{lemma}\label{lem6} For any $n\geq 1$
\begin{equation}\label{eq13}
\frac{\Delta(\mathcal{A}_n)}{\Delta(\mathcal{B}_n)}=\prod_{1\leq i,j\leq n}\frac{2n+1+2j-2i}{2n-1+2j-2i}
\end{equation}
\end{lemma}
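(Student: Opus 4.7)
The plan is a direct factorization of each Vandermonde-like product according to which of the two "halves" the two factors in a difference come from. Write $\mathcal{A}_n=\mathcal{A}_n^{(1)}\sqcup\mathcal{A}_n^{(2)}$ with $\mathcal{A}_n^{(1)}=\{1,3,\dots,2n-1\}$ and $\mathcal{A}_n^{(2)}=\{2n+2,2n+4,\dots,4n\}$, and analogously $\mathcal{B}_n=\mathcal{B}_n^{(1)}\sqcup\mathcal{B}_n^{(2)}$ with $\mathcal{B}_n^{(1)}=\{2,4,\dots,2n\}$ and $\mathcal{B}_n^{(2)}=\{2n+1,2n+3,\dots,4n-1\}$. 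Then each of $\Delta(\mathcal{A}_n)$ and $\Delta(\mathcal{B}_n)$ splits into three pieces: the intra-half differences within the first half, the intra-half differences within the second half, and the "cross" differences between elements of the two halves.

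The key observation is that each of the four sets $\mathcal{A}_n^{(1)}, \mathcal{A}_n^{(2)}, \mathcal{B}_n^{(1)}, \mathcal{B}_n^{(2)}$ is a shifted arithmetic progression of length $n$ with common difference $2$. Consequently each of the four intra-half products equals the same constant $\prod_{1\leq i<j\leq n} 2(j-i) = 2^{n(n-1)/2}\prod_{1\leq i<j\leq n}(j-i)$, and so the two intra-half contributions in the numerator cancel exactly with the two in the denominator.

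All that survives is the ratio of the cross products. Parametrising elements of the first half by $i$ and of the second half by $j$ with $1\leq i,j\leq n$, the cross differences are
\[
(2n+2j)-(2i-1)=2n+2j-2i+1
\]
for $\mathcal{A}_n$, and
\[
(2n-1+2j)-2i=2n+2j-2i-1
\]
for $\mathcal{B}_n$. Taking the quotient term by term over $1\leq i,j\leq n$ gives exactly the right-hand side of \eqref{eq13}, completing the proof.

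Nothing in this argument is really an obstacle; the only thing to be careful about is keeping the indexing consistent so that the four intra-half products really are identical, and that the cross indices $(i,j)$ run over the same square $[1,n]\times[1,n]$ for both $\mathcal{A}_n$ and $\mathcal{B}_n$. No new identity is needed — the lemma reduces to bookkeeping once the first-half/second-half split is made.
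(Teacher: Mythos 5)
Your proof is correct and is essentially the paper's own argument: the same split of each set into its two length-$n$ arithmetic progressions, cancellation of the identical intra-half Vandermonde factors, and term-by-term comparison of the cross differences $2n+2j-2i\pm 1$.
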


\begin{proof}
We can partition $\mathcal{A}_n=\mathcal{C}_n\sqcup \mathcal{D}_n$, where $\mathcal{C}_n=\{1,3,\dotsc,2n-1\}$ and  where  \\ $\mathcal{D}_n=\{2n+2,2n+4,\dotsc,4n\}$. Therefore
\begin{align}
\Delta(\mathcal{A}_n)&=\prod_{ i<j \in \mathcal{C}_n}(j-i) \prod_{ i<j \in \mathcal{D}_n}(j-i) \prod_{ i\in \mathcal{C}_n; j\in \mathcal{D}_n}(j-i)\\
&=\prod_{1\leq i<j\leq n}((2j-1)-(2i-1)) \prod_{1\leq i<j\leq n}((2j+2n)-(2i+2n))\notag\\
&\quad \times \prod_{1\leq i,j\leq n}((2j+2n)-(2i-1))\\
&=\prod_{1\leq i<j\leq n}2(j-i) \prod_{1\leq i<j\leq n}2(j-i) \prod_{1\leq i,j\leq n}(2n+1+2j-2i)\\
&=2^{n(n-1)}\left(\prod_{1\leq i<j\leq n}(j-i)\right)^2\prod_{1\leq i,j\leq n}(2n+1+2j-2i)
\end{align}

Similarly, we have
\begin{equation}
\Delta(\mathcal{B}_n)
=2^{n(n-1)}\left(\prod_{1\leq i<j\leq n}(j-i)\right)^2\prod_{1\leq i,j\leq n}(2n-1+2j-2i)
\end{equation}
Then the equality (\ref{eq13}) follows.
\end{proof}

\begin{proof}[\textbf{Proof of Theorem \ref{main}}]
Since the dual graph $G$ of $R(n)$ is a bipartite graph, the numbers of vertices in two vertex classes of $G$ must be the same if $G$ admits perfect matchings. By enumerating vertices in each vertex class we can prove (\ref{main1}) (see example for $n=2$ in Figure \ref{main1eq}; the difference between the numbers of vertices in two classes are 1).

\begin{figure}\centering
\includegraphics[width=10cm]{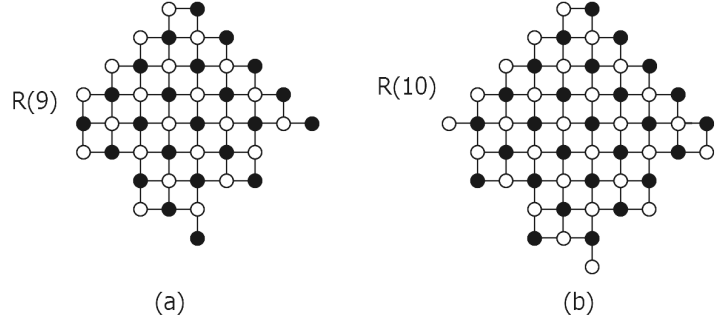}
\caption{The dual graphs of $R(9)$ and $R(10)$ with two vertex classes (black and white).}
\label{main1eq}
\end{figure}

Next, we prove four formulas (\ref{main2})-(\ref{main6}) by induction on $n\geq 1$.

\medskip
It is easy to verify those formulas for $n=1$. Assume that the formulas hold for some $n\geq1$, we will show that they hold also for $n+1$.

We have from Lemmas \ref{lem1} and \ref{lem1},  and induction hypothesis
\begin{align}\label{main3b}
\T(K_{a}(4n+4))&=\T(K_{a}(4n+2)) \text{\qquad\qquad\qquad\qquad\qquad (\textit{by Eq. (\ref{eq5}}))}\notag\\
&=2^{n+1}\T(K_{na}(4n+1)) \text{\qquad\qquad\qquad\quad\:(\textit{by Eq. (\ref{eq4}}))}\\
&=2^{2n+1}\T(K_{na}(4n)) \text{\qquad\qquad\qquad\quad\quad\;\:(\textit{by Eq. (\ref{eq2}}))}\\
&=2^{2n+1}2^{n(3n-1)/2}\prod_{1\leq i\leq j\leq n}\frac{2i+2j-1}{i+j-1} \text{\quad(\textit{by  Eq. (\ref{main5}) for $n$})}\\
&=2^{(3n^2+3n+2)/2}\prod_{1\leq i\leq j\leq n}\left(\frac{2i+2(j+1)-3}{i+(j+1)-1}\times \frac{i+(j+1)-1}{i+j-1}\right)\\
&=2^{(n+1)(3n+2)/2}2^{-n}\prod_{1\leq i\leq j\leq n}\frac{i+j}{i+j-1} \prod_{1\leq i< j\leq n+1}\frac{2i+2j-3}{i+j-1}\\
&=2^{(n+1)(3n+2)/2}2^{-n} \prod_{j=1}^{n}\frac{2j}{j} \prod_{1\leq i< j\leq n+1}\frac{2i+2j-3}{i+j-1}\\
&=2^{(n+1)(3(n+1)-1)/2}\prod_{1\leq i< j\leq n+1}\frac{2i+2j-3}{i+j-1}
\end{align}
It means that (\ref{main3}) holds for $n+1$.

From Lemmas  \ref{lem5}, \ref{lem3} and \ref{lem6}

\begin{align}\label{main4b}
\T(K_{a}(4n+5))&=\T(K_{a}(4n+3)) \text{\qquad\qquad\qquad\qquad\qquad\qquad\quad(\textit{by Eq. (\ref{eq6}}))}\notag\\
&=\T(K_{na}(4n+3))\frac{\T(K_{a}(4n+3))}{\T(K_{na}(4n+3))}\\
&=\T(K_{na}(4n+1))\frac{\T(K_{a}(4n+3))}{\T(K_{na}(4n+3))} \text{\;\quad\qquad\qquad(\textit{by Eq. (\ref{eq8}}))}\\
&=2^n\T(K_{na}(4n))\frac{\T(K_{a}(4n+3))}{\T(K_{na}(4n+3))} \text{\quad\quad\qquad\qquad(\textit{by Eq. (\ref{eq2}}))}\\
&=2^n\T(K_{na}(4n))\frac{\M(\overline{AR}_{2n+2,4n+3}(\mathcal{A}_{n+1}))}{\M(\overline{AR}_{2n+2,4n+3}(\mathcal{B}_{n+1}))} \text{\qquad\;\;(\textit{by Lemma \ref{lem3})}}\\
&=2^n\T(K_{na}(4n))\frac{\Delta(\mathcal{A}_{n+1})}{\Delta(\mathcal{B}_{n+1})} \text{\quad\qquad\qquad\quad\quad\quad\quad\textit{(by Lemma \ref{lem5}})}\\
&=2^n\T(K_{na}(4n))\prod_{1\leq i,j\leq n+1}\frac{2n+3+2j-2i}{2n+1+2j-2i} \text{\quad\quad(\textit{by Lemma \ref{lem6}})}\\
&=2^{n}\left(2^{n(3n-1)/2}\prod_{1\leq i\leq j \leq n}\frac{2i+2j-1}{i+j-1}\right)\prod_{1\leq i,j\leq n+1}\frac{2n+3+2j-2i}{2n+1+2j-2i} \\
 &=2^{n}\left(2^{n(3n-1)/2}\frac{\prod_{1\leq i\leq j \leq n+1}\frac{2i+2j-1}{i+j-1}} {\prod_{1\leq i \leq n+1}\frac{2i+2n+1}{i+n}}\right)\prod_{1\leq j\leq n+1}\frac{2n+1+2j}{2j-1}\\
&=2^{n}\left(2^{n(3n-1)/2}\prod_{1\leq i\leq j \leq n+1}\frac{2i+2j-1}{i+j-1}\right) \prod_{1\leq j\leq n+1}\frac{j+n}{2j-1}\\
&=2^{n}\left(2^{n(3n-1)/2}\prod_{1\leq i\leq j \leq n+1}\frac{2i+2j-1}{i+j-1}\right)\frac{(2n+1)!/n!}{(2n+1)!/(2^nn!)}\\
&=2^{(n+1)(3(n+1)-3)/2}\prod_{1\leq i\leq j \leq n+1}\frac{2i+2j-1}{i+j-1}
\end{align}
This implies that (\ref{main4}) holds for $n+1$.

\medskip

Similarly, we get the ratio $\dfrac{\T(K_{a}(4n+4))}{\T(K_{na}(4n+4))}$ by dividing  (\ref{eq9}) by (\ref{eq10}). Then from (\ref{eq7}) and the formula (\ref{main3}) for $n+1$, we can verify (\ref{main5}) for $n+1$. Again, two equalities (\ref{eq11}) and (\ref{eq12}) imply the ratio $\dfrac{\T(K_{a}(4n+3))}{\T(K_{na}(4n+3))}$; then from (\ref{eq8}) and the equality (\ref{main4}) for $n+1$, we get (\ref{main6})  for $n+1$.

Finally, from (\ref{eq1}) and (\ref{eq9}) together with the equality (\ref{main3}) for $n+1$, we can prove (\ref{main2}) for $n+1$.
\end{proof}

\subsection*{Acknowledgements}
Thanks to Professor Mihai Ciucu and Professor James Propp for giving me the manuscript of \cite{JP}.

\end{document}